\g@addto@macro\th@plain{\thm@headpunct{}}
\title{A renewal theorem and supremum of a perturbed random walk}
\author[E. Damek]{Ewa Damek}
\address{Institute of Mathematics\\ Wroclaw University\\ 50-384 Wroclaw\\
	pl. Grunwaldzki 2/4\\ Poland}
\email{edamek@math.uni.wroc.pl}
\author[B. Ko{\l}odziejek]{Bartosz Ko{\l}odziejek}
\address{Faculty of Mathematics and Information Science\\Warsaw University of Technology\\ Koszykowa 75\\00-662 Warsaw, Poland}
\email{b.kolodziejek@mini.pw.edu.pl}
\keywords{perturbed random walk; regular variation; renewal theory} 
\subjclass[2010]{Primary 60H25; Secondary 60E99}
\newtheorem{theorem}{Theorem}[section]
\newtheorem{proposition}[theorem]{Proposition}
\newtheorem{remark}[theorem]{Remark}
\newcommand{\dd}{\mathrm{d}}
\newcommand{\Ind}{\mathbf{1}_}
\newcommand{\eps}{\varepsilon}
\renewcommand{\a}{\alpha}
\renewcommand{\b}{\beta}
\renewcommand{\d}{\delta}
\newcommand{\E}{\mathbb{E}}
\newcommand{\N}{\mathbb{N}}
\newcommand{\Z}{\mathbb{Z}}
\renewcommand{\P}{\mathbb{P}}
\newcommand{\R} {{\mathbb R}}
\begin{document}
	\begin{abstract}{We study tails of the supremum of a perturbed random walk under regime which was not yet considered in the literature. Our approach is based on a new renewal theorem, which is of independent interest.
			
			We obtain first and second order asymptotics of the solution to renewal equation under weak assumptions and we apply these results to obtain first and second order asymptotics of the tail of the supremum of a perturbed random walk.}
	\end{abstract}
\maketitle

\section{Introduction}
\subsection{Renewal theorems} Almost every renewal quantity may be described as the solution $f$ to an integral equation
\begin{equation}\label{reneq}
f=\psi + f\ast\mu,
\end{equation}
where $\mu$ is a probability measure and $\psi$ is a locally bounded function.
When functions $f$, $\psi$ and measure $\mu$ are supported on $\R^+$, then $f$ is given by 
\begin{equation}\label{rensol}
f(x)=\int _{\R }\psi (x-y ) H(\dd y)=\int _{(0,x]} \psi (x-y ) H(\dd y),
\end{equation}
where $H=\sum _{n=0}^{\infty }\mu^n$, provided \mbox{$\lim_{n\to \infty }\psi \ast\mu^n=0$}. Such equations appear frequently in different problems. In particular, they are closely related to stochastic fixed point equations, {\cite{Gol91}}.

In the general case when $\psi$ and $\mu$ are {defined} on $\R$ and $\mu$ has a strictly positive mean, the second equality in \eqref{rensol} does not hold but quite likely, due to properties of $\psi $ and $\mu$,
\[
\int _{(0,x]} \psi (x-y) H(\dd y)
\]
may become the main term in the asymptotics {of $f(x)$} as $x \to \infty $. The classical Key Renewal Theorem {(KRT)} gives the asymptotics of $f$ at infinity when $\psi $ is directly Riemann integrable but {in applications one encounters} equations \eqref{reneq} where {corresponding} $\psi$ is not even in $L^1$ and so there is a need of a more general theory.

In this paper we study behavior of the integrals
\begin{align}\label{eqLL}
\int _{(0,x]} L\left( e^{x-z}\right) H(\dd z)\qquad\mbox{ and }\qquad\int_{\R} L\left( e^{x-z}\right) H(\dd z),\quad  \mbox{as}\ x\to\infty,
\end{align}
where $L$ is a slowly varying function with the property that $L(e^x)$ is not integrable on $\R^+$. 
The asymptotics of \eqref{eqLL} does not follow from the classical KRT nor from known relatives (see e.g.  \cite[Section 6.2.3]{Iks}), 
because the latter results are obtained under additional assumption that the integrand function is (ultimately) monotone or is asymptotically equivalent to a monotonic function. In our case, $x\mapsto L\left(e^x\right)$ may exhibit infinite oscillations, so in general it is not asymptotically equivalent to a monotonic function. Slow
variation of $L$ plays here the role of the regularity condition. For $0\leq x_0 <x$, let
\begin{align}\label{defL}
\widetilde{L}(x_0,x)=\int_{x_0}^xt^{-1}L(t)\dd t.
\end{align}
{If the integral exists}, $\widetilde{L}(x_0,x)$ is again slowly varying as a function of $x$. Under very mild assumptions on $L$, we show that (Theorem \ref{corL})
\begin{align*}
\int_{(0,x]} L\left( e^{x-z}\right) H(\dd z)\sim \frac{1}{m}\widetilde{L} (x_0,e^x)\qquad\mbox{ as }x\to\infty
%\int_{x_0}^{e^x} \frac{L(t)}{t}\dd t\to\infty,\qquad\mbox{ as }x\to\infty
\end{align*}
for any $x_0>0${, where $m$ is the mean of $\mu$}.
Here and henceforth, $f(x)\sim g(x)$ means that $f(x)/g(x)\to 1$ as $x\to\infty$.
%It is worth noting here that the function $\widetilde{L}(x):=\int_{1}^{x} %t^{-1}L(t)\dd t$ is again slowly varying. 
The proof of Theorem \ref{corL} is surprisingly simple and although we use particular properties of slowly varying functions, the scheme behind it may be adopted to other situations.

Under some further assumptions we are able to reduce the second quantity in 
\eqref{eqLL} to the first one and to obtain
%In our second main result (Theorem \ref{LTH}) we deal with 
the second order asymptotics (Theorem \ref{LTH}). Imposing some more regularity on {$\mu$}, we show that
\begin{align*}
\int_{\R} L\left( e^{x-z}\right) H(\dd z)= \frac{1}{m}\widetilde{L}(0,e^x)
+O\left(L\left(e^x\right)\right),\qquad\mbox{ as }x\to\infty.
\end{align*}
\subsection{Supremum of a perturbed random walk}
Consider
\begin{align}\label{defR}
R=\sup_{n\geq 1}\left\{A_1\cdot\ldots\cdot A_{n-1}\cdot B_n\right\},
\end{align}
where the sequence $(A_n,B_n)_{n\geq 1}$ is i.i.d.  and $A_1\geq 0$ a.s. 
The renewal {equation \eqref{reneq}} arises naturally from the study of the {right} tail of $R$. When $A_1$ and $B_1$ are positive a.s. then taking the logarithm of both sides of \eqref{defR}, $M=\log R$ is the supremum of the so-called perturbed random walk (PRW)
\begin{equation}\label{PRW}
X_1+\dots +X_{n-1}+\xi _n,
\end{equation}
where $X_1=\log A_1$, $\xi _1=\log B_1$. In a more general case 
considered here it is natural to call $A_1\cdot\ldots\cdot A_{n-1}\cdot B_n$ a  
perturbed multiplicative random walk.
Random variable $R$, if exists, satisfies the following fixed-point stochastic equation
\begin{align*}
R\stackrel{d}{=}\max \{ AR,B\},  \qquad R\mbox{ and }(A,B)\mbox{ on the r.h.s. are independent}.
\end{align*} 
$\P(R>x)$ converges to zero when $x$ tends to infinity and it is a natural question to describe the rate at which it happens. We do it here under specific assumptions and the result clearly applies to $M$. PRW is a natural extension of the random walk with applications to queuing theory, insurance risk as well as to telecommunication networks \cite{Iks}, \cite{Palm07}, \cite{Schle98}, \cite{Schm95}. 
Supremum of the process with regenerative increments can be represented as $M$ for an appropriate PRW and the size of the largest box in the Bernoulli sieve as $R$, \cite{Iks}. 
Therefore, supremum of PRW is {a natural object }%the first  thing 
to study. 
Supremum of PRW, $M$, inherits characteristics related to $\max _n \{X_1+\dots + X_n\} $ and to the extreme behavior of the perturbations $\xi _n$. Tail behavior of $M$ is studied  in three main regimes in \cite{Palm07}, one of them being the Cramer case which 
requires existence of some exponential moments of $X_1$ and $\xi _1$.    
Since our framework is a little bit more general, we shall explain it in terms of $A_1=e^{X_1}, B_1=e^{\xi _1}$.

In the Cramer case the tail behavior of $R$ may be determined by $A$ or $B$ alone, or by both of them. The first case happens when the tail of $B$ is regularly varying with index $-\a<0$, $\E A^{\a }<1$ and $\E A^{\a+\eps}<\infty$ for some $\eps>0$. Then (\cite[Theorem 3]{Ara06}, \cite[Theorem 2]{Palm07}) 
\begin{align*}%\label{grey}
\P (R>x)\sim \frac{1}{1-\E A^\a}\P (B>x).
\end{align*}
On the other hand, if $\E A^{\a }=1$, $\E B_+^{\a }<\infty$, $\rho=\E A^{a }\log A<\infty$ and the distribution of $\log A$ given $A>0$ is non-arithmetic, then (\cite[Theorem 5.2]{Gol91}) 
\begin{align*}%\label{gol}
\lim_{x\to\infty}x^\a\P (R>x)=\frac{1}{\alpha\rho}\E \left(\max(AR_+,B_+)^\a -(AR_+)^\a\right)
\end{align*}
and it is $A$ that plays the main role here.

The Cramer case when both $A$ and $B$ contribute significantly to the tail has not been considered yet for $R$ although it is quite natural to do it, see the example described in Subsection \ref{extremes}.

We assume that 
\begin{align*}
&\mbox{there exists }\alpha>0 \mbox{ such that }\E A^{\a }=1,\qquad \rho=\E A^{\a }\log A<\infty, \\%\label{ass} \\
&B \,\, \mbox{has a regularly varying right tail with index} \ -\a,\qquad \E B_+^{\a }=\infty  %\label{ass1}
\end{align*}
(see Section \ref{taila} for the rest of assumptions) and we describe the right tail of $R$ (Theorem \ref{pert}). If \mbox{$\P(B>x)=x^{-\alpha}L(x)$} for $x>0$, where $L$ is a slowly varying function, then 
\begin{equation}\label{mainass}
x^{\a}\P(R>x)\sim \frac{1}{\rho }\widetilde{L}(0,x) \sim\, \frac{\E B_+^\alpha\Ind{B\leq x}}{\alpha\rho}\to\infty.
\end{equation} 
and so the tail is essentially bigger than that of $B${, since $\widetilde{L}(0,x)/L(x)\to\infty$ as $x\to\infty$; see \eqref{Lreg2}}. Appearance of the function $\widetilde{L}$ is probably the most interesting phenomenon here.
To obtain \eqref{mainass} we use a renewal theorems mentioned in the previous section. {A somehow related problem was dealt with in \cite{Kevei2016}.}

Finally, assuming some more regularity on the distribution of $A$, we obtain the second order asymptotics in \eqref{mainass}, that is,
\begin{equation*}%\label{second}
x^{\a }\P (R>x)=\frac{1}{\rho }\widetilde{L}(0,x) -\frac{\E\min\{AR,B\}_+^\alpha}{\alpha\rho} + O( L(x) ),\qquad\mbox{as }x\to\infty,
\end{equation*}
see Theorem \ref{pert} {\rm (ii)}. Note that if $L$ is asymptotically bounded away from zero, then we have $x^{\a }\P (R>x)=\rho^{-1}\widetilde{L}(0,x)+O(L(x))$, but such claim is not true if $L$ is decreasing to $0$ (e.g. $L(x)\sim (\log x)^{-1}$).

We hope that \eqref{mainass} holds in a more general setting: 
%In the forthcoming paper  by the same authors, we will show consider 
%a random variable $R$, which satisfies 
\[R\stackrel{d}{=}\psi(R),\qquad R\mbox{ and }\psi\,\mbox{ are independent},\]
where $\psi\colon \R\to \R$ is a random {Lipschitz} mapping such that for any $x\in \R$
\[\max(Ax,B+C_1)\leq \psi(x)\leq Ax+B+C_2,\qquad \mbox{a.s.}\]
and $\E C_i^\alpha<\infty$. 
But then calculations become much more technical and it is still a work in progress.

\subsection{Extremes of perturbed random walk}\label{extremes} There is a somehow related problem, where contributions to asymptotics of some statistic may come from one of two ingredients alone or from both of them. 
Let $(\xi_n,\eta_n)_{n}$ be a sequence of i.i.d. two-dimensional random vectors with generic copy $(\xi,\eta)$. Consider the maximum of PRW, $M_n=\max_{1\leq k\leq n}\{ S_{k-1}+\eta_k\}$, where $(S_n)_{n\geq 1}$ is a random walk with i.i.d. increments $\xi_k$, $\E\xi_k=0$ and $\E\xi_k^2<\infty$, $S_0=0$. The aim is to study convergence in distribution of $a_n M_n$ for some suitable chosen deterministic sequence $(a_n)_{n\geq 1}$. There are essentially three distinct cases. 
%First two cases are considerably easier then the last one. 
In the first case $\E \eta^2<\infty$, $S_n$ dominates the perturbation and the limit of $a_n M_n$ coincides with the limit of $a_n \max_{1\leq k\leq n}\{ S_{k-1}\}$. In the second one, the tail $\P(\eta>x)$ is regularly varying with index $\gamma\in(-2,0)$, perturbation $\eta_n$ dominates the random walk and the limit coincides with the limit of $a_n\max_{1\leq k\leq n}\{ \eta_k\}$. For above see \cite[Theorem 3]{Ren11}.
In the most interesting, third case, that is, if $\P(\eta>x)\sim c x^{-2}$ for some $c>0$, both random walk and the perturbation have comparable contributions, see \cite{Wan14,IksPi14} along with generalization to functional limit theorems. Further developments have been made recently in \cite{IksPiSa17}, where the assumption $\E\xi_k^2 < \infty$ is dispensed with.
\section{Preliminaries}\label{Preli}

\subsection{Regular variation}
A measurable function $f\colon(0,\infty)\to(0,\infty)$ is called \emph{regularly varying with index $\rho$}, %(denoted $f\in R(\rho)$), 
$|\rho|<\infty$, if for all $\lambda>0$,
\begin{align}\label{reg}
\lim_{x\to\infty}\frac{f(\lambda x)}{f(x)}=\lambda^\rho.
\end{align}
The class of such functions will be denoted $R(\rho)$.
If $f\in R(0)$ then $f$ is called a \emph{slowly varying function}. The class of slowly varying functions plays a fundamental part in the Karamata's theory of regular variability, since if $f\in R(\rho)$, then $f(x)=x^\rho L(x)$ for some $L\in R(0)$. Below, we introduce some basic properties of the class $R(0)$ that, later on, will be essential. 

If $L\in R(0)$ is bounded away from $0$ and $\infty$ on every compact subset of $[0,\infty)$, then for any $\delta>0$ there exists $A=A(\delta)>1$ such that (Potter's Theorem, see e.g \cite{BDM2016}, Appendix B)
$$\frac{L(y)}{L(x)}\leq A \max\left\{\left(y/x\right)^{\delta},\left(y/x\right)^{-\delta}\right\},\qquad x,y>0.$$

Assume that $L\in R(0)$ is locally bounded on $(X,\infty)$ for some $X>0$. Then, for $\alpha>0$ one has
\begin{align}\label{Lreg}
\int_X^x t^\alpha \frac{L(t)}{t}\mathrm{d}t\sim \alpha^{-1}x^\alpha L(x)
\intertext{and this result remains true also for $\alpha=0$ in the sense that}
\frac{\int_X^x \frac{L(t)}{t}\mathrm{d}t}{L(x)}\to\infty\qquad\mbox{ as }x\to\infty.\label{Lreg2}
\end{align}

Define $\widetilde{L}(x):=\int_X^x t^{-1}L(t)\mathrm{d}t$.
Then, for any $\lambda>0$,
\begin{align}\label{smallint}
\frac{\widetilde{L}(\lambda x)-\widetilde{L}(x)}{L(x)}=\int_1^{\lambda}\frac{L(xt)}{L(x)}\frac{\dd t}{t}\to\log\lambda,
\end{align}
since the convergence in \eqref{reg} is locally uniform outside zero \cite[Theorem 1.5.2]{BGT89}.
Moreover, since
$$\frac{\widetilde{L}(x)}{L(x)}\left( \frac{\widetilde{L}(\lambda x)}{\widetilde{L}(x)}-1\right)\to \log\lambda\qquad\mbox{ as }x\to\infty,$$
\eqref{Lreg2} implies that $\widetilde{L}$ is slowly varying.
In the theory of regular variation, $\widetilde{L}$ is called the de Haan function.

\subsection{Renewal theory}
Let $(Z_k)_{k\geq 1}$ be the sequence of independent copies of a random variable $Z$ with $\E Z>0$. 
We write $S_n=Z_1+\ldots+Z_n$ for $n\in\mathbb{N}$ and $S_0=0$.
The measure defined by 
\[H(B):=\sum_{n=0}^\infty \P(S_n\in B),\qquad B\in\mathcal{B}(\R)\]
is called the \emph{renewal measure of $(S_n)_{n\geq 1}$}. 
 Condition $\E Z>0$ along with $\E Z_-^2<\infty$ imply that $H((-\infty,x])$ is finite for all $x\in\R$ (see e.g. \cite[Theorem 2.1]{KM96}).

We say that the distribution of $Z$ is \emph{arithmetic} if its support is contained in $d\Z$ for some $d>0$ and \emph{non-arithmetic}, otherwise. Equivalently, the distribution of $Z$ is arithmetic if and only if there exists $0\neq t\in\R$ such that $f_Z(t)=1$, where $f_Z$ is the characteristic function of the distribution of $Z$.
The law of $Z$ is \emph{strongly non-lattice} if the Cramer condition is satisfied, that is, $\limsup_{|t|\to\infty}|f_Z(t)|<1$.

A fundamental result of renewal theory is the Blackwell theorem \cite{Black53}: if the distribution of $Z$ is non-arithmetic, then for any $t>0$, 
\[H\left((x,x+t] \right)\to \frac{t}{\E Z}\qquad \mbox{ as }x\to\infty.\]

Note that in the non-arithmetic case, since $H\left((x,x+t]\right)$ is convergent as $x\to\infty$ we have $C=\sup_{x}H\left((x,x+1] \right)<\infty$ and so
\begin{align}\label{Hineq1}
H\left((x,x+h] \right)\leq \left\lceil h\right\rceil C\leq \alpha h+\beta
\end{align}
for some positive $\alpha$, $\beta$ and any $h>0$.

Under additional assumptions we know more about the asymptotic behavior of $H$ (see \cite{Stone65}).
If for some $r>0$ one has $\P(Z\leq x)=o(e^{rx})$ as $x\to-\infty$, then there exists $r_1>0$ such that 
\begin{align}\label{Hleft}
H\left((-\infty,x]\right)=o(e^{r_1 x})\qquad\mbox{ as }x\to-\infty.
\end{align}
Exact asymptotics of $H\left((-\infty,x]\right)$ as $x\to-\infty$ in the presence of $\a>0$ such that $\E e^{-\a Z}=1$ are given in \cite{BK16}.

Finally, if $Z$ has finite second moment and for some $r>0$, $\P(Z>x)=o(e^{-r x})$ as $x\to\infty$ and the distribution of $Z$ is strongly non-lattice, then for some $r_1>0$ (see \cite{Stone65})
\begin{align}\label{Hright}
H\left((-\infty,x]\right)=\frac{x}{\E Z}+\frac{\E Z^2}{2(\E Z)^2}+o(e^{-r_1} x)\qquad \mbox{ as }x\to\infty.
\end{align}

\section{Renewal Theorems}
A function $f\colon\R\to\R^+$ is called \emph{directly Riemann integrable} on $\R$ (dRi) if for any $h>0$,
\begin{align}\label{dRi}
\sum_{n\in\Z}\sup_{(n-1)h\leq y<nh}f(y)<\infty
\end{align}
and
\[\lim_{h\to0^+}h\cdot\left(\sum_{n\in\Z}\sup_{(n-1)h\leq y<nh}f(y)-\sum_{n\in\Z}\inf_{(n-1)h\leq y<nh}f(y) \right)=0.\]
If $f$ is locally bounded and a.e. continuous on $\R$, then an elementary calculation shows that \eqref{dRi} with $h=1$ implies direct integrability of $f$.
If the distribution of $Z$ is non-arithmetic, for directly Riemann integrable function $f$, we have the following {\it Key Renewal Theorem}: (see e.g. \cite[Theorem 4.2]{AMN78})
\[\int_{\R} f(x-z)H(\dd z)\to\frac{1}{\E Z}\int_{\R} f(t)\dd t,\qquad x\to\infty.\]
There are many variants of this theorem, when $f$ is not necessarily $L^1$ - see \cite[Section 6.2.3]{Iks}. Such results are usually obtained by additional requirement that $f$ is (ultimately) monotone or $f$ is asymptotically equivalent to a monotone function. 

Here we obtain a renewal result that is essentially stronger: an asymptotic of
\[\int_{(0,x]} L(e^{x-z})H(\dd z)\]
for a slowly varying function $L$, Theorem \ref{corL}. Such a function may exhibit infinite oscillations, so in general it is not asymptotically equivalent to a monotonic function. 

\begin{theorem}\label{corL}
	Assume that $0<\E Z<\infty$ and the law of $Z$ is non-arithmetic.
		Let $L$ be a slowly varying function, which is locally bounded on $[1,\infty)$. For any $x_0\geq 0$ such that  
		 $\widetilde{L}(x_0,x)$
		is finite and $\widetilde{L}(x_0,x)\to\infty$ as $x\to\infty$, one has
		\[\int_{(0,x]} L\left(e^{x-z}\right)H(\dd z)\sim \frac{1}{\E Z}\widetilde{L}\left(x_0,e^x\right).\]
\end{theorem}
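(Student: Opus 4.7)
The plan is to substitute $u = x - z$, rewriting the quantity of interest as $I(x) = \int_{[0,x)} L(e^u)\,\mu_x(du)$, where $\mu_x$ is the pushforward of $H|_{(0,x]}$ under $z\mapsto x - z$; then to show $I(x) \sim (\E Z)^{-1}\int_{\log x_0}^{x} L(e^u)\,du$, which equals $(\E Z)^{-1}\widetilde L(x_0, e^x)$ after the change of variables $t = e^u$. The intuition is that by Blackwell's theorem $\mu_x$ locally resembles Lebesgue measure divided by $\E Z$, while slow variation of $L$ makes $L(e^u)$ almost constant on $u$-intervals of bounded length, so a Riemann-sum comparison can be made rigorous.

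Fix $\varepsilon > 0$ and $h = 1$. By Blackwell's theorem and the local uniform convergence implicit in slow variation (\cite[Theorem 1.5.2]{BGT89}), choose $A > 0$ so large that $|H((y, y+h]) - h/\E Z| \leq \varepsilon h/\E Z$ for all $y \geq A$, and $(1-\varepsilon) L(e^v) \leq L(e^u) \leq (1+\varepsilon) L(e^v)$ whenever $u, v \geq A$ and $|u - v| \leq h$. I would split $I(x)$ according to $u \in [0, A)$, $[A, x - A)$, $[x - A, x)$. The low-$u$ piece is bounded by $\sup_{0 \leq u \leq A} L(e^u) \cdot H((x - A, x]) = O(1)$ by local boundedness of $L$ and Blackwell. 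The high-$u$ piece is bounded by $C\, L(e^x) \cdot H((0, A])$ for some constant $C$ (using slow variation on the bounded range $|u - x| \leq A$), and since $H((0, A]) \leq \alpha A + \beta$ by \eqref{Hineq1}, this contribution is $O(L(e^x)) = o(\widetilde L(x_0, e^x))$ by \eqref{Lreg2}.

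For the main piece, partition $[A, x - A)$ into intervals of width $h$. The Blackwell bound applies uniformly to each, since the corresponding $y$-values (of the form $x - A - (k+1)h$) stay $\geq A$ throughout this range, while slow variation keeps $L(e^u)$ almost constant on each interval. Summing the two uniform estimates yields
\[
\int_{[A, x - A)} L(e^u)\,\mu_x(du) = \bigl(1 + O(\varepsilon)\bigr) \cdot \frac{1}{\E Z} \int_A^{x - A} L(e^u)\,du.
\]
The last integral equals $\widetilde L(e^A, e^{x - A})$, and is asymptotic to $\widetilde L(x_0, e^x)$ because $\widetilde L(x_0, \cdot)$ is slowly varying (so $\widetilde L(x_0, e^{x - A})/\widetilde L(x_0, e^x) \to 1$) and the initial segment $\widetilde L(x_0, e^A)$ is a finite constant. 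Collecting the three pieces gives $I(x) = (1 + O(\varepsilon))\widetilde L(x_0, e^x)/\E Z + o(\widetilde L(x_0, e^x))$; dividing by $\widetilde L(x_0, e^x)/\E Z$ and letting $\varepsilon \to 0$ completes the proof.

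The main obstacle is uniformity: Blackwell's theorem gives only pointwise convergence of $H((y, y+h])$, so care is needed to ensure every $y$-value occurring in the partition of the main piece lies in the regime where the $\varepsilon$-bound is valid. This is precisely what forces peeling off the edge region $[x - A, x)$ separately and absorbing its contribution into the $O(L(e^x))$ error, which \eqref{Lreg2} tells us is negligible compared to $\widetilde L(x_0, e^x)$.
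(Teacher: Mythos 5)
Your proof is correct and follows essentially the same approach as the paper's: the same three-way decomposition into two boundary pieces (bounded via local boundedness/slow variation and Blackwell) and a middle piece approximated by a Riemann sum against the renewal measure, then a final comparison of $\widetilde L(e^A, e^{x-A})$ with $\widetilde L(x_0, e^x)$ via slow variation of $\widetilde L$. The only superficial difference is that you control the $L$-ratios on the partition intervals through the uniform convergence theorem with fixed mesh $h=1$, whereas the paper uses Potter bounds with mesh $1/n \to 0$; the two estimates are interchangeable here.
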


\begin{remark}\label{rem}
The assumption that the law of $Z$ is non-arithmetic is not crucial here. The same result holds if one assumes that the law of $Z$ is arithmetic and the proof of such result requires only small modifications necessitated by the use of the Blackwell theorem in the arithmetic case.
\end{remark}
Under stronger assumptions, particularly assuming that $x\mapsto x^{-\alpha}L(x)$ is a monotonic function, we may prove second order asymptotics. 

\begin{theorem}\label{LTH}
	Assume that $0<\E Z<\infty$, $\E e^{\varepsilon Z}<\infty$ for some $\varepsilon>0$, the law of $Z$ is strongly non-lattice and $\P(Z\leq x)=o(e^{rx})$ as $x\to-\infty$ for some $r>0$. Assume further that there is a random variable $B$, a slowly varying function $L$ and a constant $\alpha>0$ such that $\P(B>x)=x^{-\alpha}L(x)$ for $x>0$.
		Let $\widetilde{L}(0,x)\to\infty$ as $x\to\infty$. %=\int_0^x t^{-1}L(t)
		Then
		\begin{align}\label{eq2}\int_{\R} L\left(e^{x-z}\right)H(\dd z)=\frac{1}{\E Z}\widetilde{L}\left(0,e^{x}\right)+O\left(L(e^x)\right).\end{align}
\end{theorem}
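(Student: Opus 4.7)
The plan is to compare the renewal measure $H$ against $(\E Z)^{-1}$ times Lebesgue measure on $[0,\infty)$, and to show that the residual discrepancy, integrated against $L(e^{x-z})$, contributes only $O(L(e^x))$. Under the hypotheses of Theorem~\ref{LTH} (strongly non-lattice $Z$ with two-sided exponential moments), Stone's theorem upgrades the CDF-level asymptotics \eqref{Hleft} and \eqref{Hright} to a measure-level decomposition: the signed measure
\[
\sigma(\dd z):=H(\dd z)-\frac{1}{\E Z}\Ind{[0,\infty)}(z)\,\dd z
\]
is finite and satisfies $\int_\R e^{r|z|}\,\dd|\sigma|(z)<\infty$ for some $r>0$. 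I would invoke this form directly from \cite{Stone65}.

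Given the decomposition, the integral splits as
\[
\int_\R L(e^{x-z})H(\dd z)=\frac{1}{\E Z}\int_0^\infty L(e^{x-z})\,\dd z+\int_\R L(e^{x-z})\,\sigma(\dd z).
\]
For the main term, the substitution $u=e^{x-z}$ yields $\int_0^\infty L(e^{x-z})\,\dd z=\int_0^{e^x}u^{-1}L(u)\,\dd u=\widetilde L(0,e^x)$, which is the principal term of \eqref{eq2}. Note that $\widetilde L(0,e^x)$ is finite because $L(u)=u^\a \P(B>u)\leq u^\a$ makes $L(u)/u$ integrable at $0$.

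It remains to show $\int_\R L(e^{x-z})\,\sigma(\dd z)=O(L(e^x))$, which I would do by splitting at $z=x$. For $z\leq x$ one has $e^{x-z}\geq 1$, and since $L$ is positive, right-continuous and slowly varying, it is bounded away from $0$ and $\infty$ on compact subsets of $[1,\infty)$; Potter's bound with an exponent $\delta<r$ then gives $L(e^{x-z})\leq A\,L(e^x)\,e^{\delta|z|}$, so this piece is dominated by $AL(e^x)\int_\R e^{\delta|z|}\,\dd|\sigma|(z)=O(L(e^x))$. For $z>x$ we have $e^{x-z}<1$, and the uniform inequality $L(v)\leq v^\a$ yields $L(e^{x-z})\leq e^{\a(x-z)}\leq 1$, so this piece is bounded by $|\sigma|((x,\infty))$, which by exponential integrability is $O(e^{-rx})=o(L(e^x))$. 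Combining the two pieces with the main term proves \eqref{eq2}.

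The principal obstacle is the first step: the preliminaries supply \eqref{Hright} and \eqref{Hleft} only at the level of the CDF $H((-\infty,x])$, whereas my error estimate needs the total variation $|\sigma|$ itself to be exponentially integrable. Establishing this requires the full measure-theoretic form of Stone's 1965 theorem (or, alternatively, a direct convolution estimate exploiting Cramer's condition on the characteristic function of $\mu$ together with $\E e^{\eps Z}<\infty$). Once this is in place, the rest is bookkeeping with Potter's theorem and the trivial bound $L(v)\leq v^\a$, with the split at $z=x$ cleanly isolating the regime where Potter fails to be directly available because $L(v)\to 0$ as $v\to 0^+$.
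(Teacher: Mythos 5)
Your decomposition $\sigma(\dd z)=H(\dd z)-(\E Z)^{-1}\Ind{[0,\infty)}(z)\,\dd z$ is exactly the right object, and the bookkeeping with Potter bounds, the trivial inequality $L(v)\le v^\alpha$, and the split at $z=x$ is all correct. But the step you flag as the ``principal obstacle'' is in fact a genuine gap, not a loose end: the exponential integrability $\int_\R e^{r|z|}\,|\sigma|(\dd z)<\infty$ does \emph{not} follow from Stone's theorem under the hypotheses of Theorem~\ref{LTH}. Stone (1965) gives only the distribution-function asymptotics recorded in \eqref{Hleft} and \eqref{Hright}. The strongly non-lattice (Cramer) condition does not force $Z$ to be spread-out, so $H$ can be mutually singular with Lebesgue measure; in that case $|\sigma|$ restricted to $[0,\infty)$ equals $H|_{[0,\infty)}+(\E Z)^{-1}\mathrm{Leb}|_{[0,\infty)}$, whose mass is infinite by the Blackwell theorem, and your bound for the region $z\le x$ collapses. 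Total-variation refinements of Stone's theorem do exist, but they require the spread-out hypothesis, which is strictly stronger than what Theorem~\ref{LTH} assumes.

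The paper circumvents this by never touching $|\sigma|$. Writing $L(e^{x-z})=e^{\alpha(x-z)}\P(B>e^{x-z})$ and splitting $\P(B>e^{x-z})=\P(B>e^x)+\P(e^x\ge B>e^{x-z})$, the main term becomes, after Fubini and the change of variable $t=z-x+\log B$, an expectation of $B^\alpha\int_0^\infty e^{-\alpha t}\,\dd R(t+x-\log B)$ with $R(y)=H((-\infty,y])-y/\E Z$. One then integrates by parts against the smooth factor $e^{-\alpha t}$ (not against $L$, which may be irregular), reducing everything to pointwise bounds $|R(y)-C|\le K e^{-ry}$, $y\ge 0$ — precisely the CDF-level statement \eqref{Hright}, which \emph{is} available under strongly non-lattice. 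To repair your argument you would either need to strengthen the hypothesis to spread-out (changing the theorem), or replace the estimate against $|\sigma|$ by an integration by parts that exploits the specific form $L(e^{x-z})=e^{\alpha(x-z)}\P(B>e^{x-z})$, as the paper does.
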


Proofs of both renewal theorems are postponed to Section \ref{proofs}. {We note only that for any slowly varying function there exist $\alpha>0$ and another slowly varying function $L_0$ such that $L(x)\sim L_0(x)$ and $x^{-\alpha}L_0(x)$ is the tail of a probability distribution. In this sense, the assumption of existence $B$ in Theorem \ref{LTH} is not very restrictive.}

\section{Tails of the supremum of perturbed random walk}\label{taila}
\subsection{Notation and assumptions} 
Throughout the paper, $\log$ stands for the natural logarithm. We are going to write $a_+$ for $\max\{a,0\}$.
For any $n\geq 1$ we write $\Pi_n=A_1\cdot\ldots\cdot A_n$ and $\Pi_0=1$.
Our standing assumptions are:
\begin{itemize}
	\item[(A-1)] $\P(A\geq 0)=1$, the law of $\log A$ given $A>0$ is non-arithmetic,
	\item[(A-2)] there exists $\alpha>0$ such that $\E A^{\alpha}=1$, $\E A^{\alpha}\log A<\infty$,
	\item[(B-1)] $L(x):=x^\alpha\P(B>x)\in R(0)$,
	\item[(B-2)] $\E B_+^\alpha=\infty$.
\end{itemize}

\noindent Note that under \rm{(A-2)}
\[\rho=\E A^{\alpha}\log A\]
is strictly positive.
 Indeed, consider $f(\beta):=\E A^\b$. Since $f(0)=1=f(\alpha)$, $f$ is convex, we have $f'(\a)=\rho>0$. 
	
{Let us denote $\widetilde{L}(0,x)$ defined in \eqref{defL} by $\widetilde{L}(x)$. Note that there is no problem with integrability near $0+$ as under {\rm(B-1)} we have $L(x)\leq x^{\alpha}$.}

As an easy consequence of \eqref{Lreg} we obtain
\begin{proposition}\label{Prop1}
	Suppose that \rm{(B-1)} is satisfied. Then
	\begin{align*}
	\E B_+^\alpha \Ind{B\leq x}&=\alpha \widetilde{L}(x){-L(x)\sim \alpha\widetilde{L}(x)}
	\end{align*}
	and for any $r>0$, 
	\begin{align*}
	\E B_+^{\alpha+r}\Ind{B\leq x}&=(\alpha+r)\int_0^x t^{\alpha+r-1}\P(B>t)\dd t{-x^{\alpha+r}\P(B>x)}\sim {\frac{\alpha}{r}}x^r L(x).
	\end{align*}
\end{proposition}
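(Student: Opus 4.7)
My plan is to derive both identities from a single Fubini computation, and then read off the asymptotics from the Karamata-type relations \eqref{Lreg} and \eqref{Lreg2} that were recalled in Section \ref{Preli}.

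First I would observe that for any $p>0$ and any $x>0$, writing $B_+^p = p\int_0^{B_+} t^{p-1}\dd t$ and exchanging the order of integration yields
\[
\E B_+^{p}\Ind{B\leq x} = p\int_0^x t^{p-1}\bigl[\P(B>t)-\P(B>x)\bigr]\dd t = p\int_0^x t^{p-1}\P(B>t)\dd t - x^p\P(B>x).
\]
This is the only genuinely algebraic step; everything afterwards is a substitution. Under (B-1) we have $\P(B>t)=t^{-\alpha}L(t)$ for $t>0$, and the bound $L(t)\le t^{\alpha}$ guarantees integrability of $t^{-1}L(t)$ near $0$, so the integrals make sense.

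Plugging $p=\alpha$ into the identity immediately gives
\[
\E B_+^{\alpha}\Ind{B\leq x} = \alpha\int_0^x \frac{L(t)}{t}\dd t - L(x) = \alpha\widetilde{L}(x)-L(x),
\]
which is the first stated equality. The equivalence $\alpha\widetilde{L}(x)-L(x)\sim \alpha\widetilde{L}(x)$ then follows from \eqref{Lreg2}, which says $L(x)/\widetilde{L}(x)\to 0$.

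For the second claim I take $p=\alpha+r$ with $r>0$. The identity becomes
\[
\E B_+^{\alpha+r}\Ind{B\leq x} = (\alpha+r)\int_0^x t^{r-1}L(t)\dd t - x^{r}L(x),
\]
which is exactly the stated equality. For the asymptotics I would split the integral at a point $X>0$ beyond which $L$ is locally bounded (fixing the contribution on $[0,X]$ as $O(1)$ times a constant depending only on $\alpha,r,X$, hence negligible compared to $x^{r}L(x)$) and apply Karamata's relation \eqref{Lreg} to the tail part, obtaining $(\alpha+r)\int^x t^{r-1}L(t)\dd t \sim \frac{\alpha+r}{r}x^{r}L(x)$. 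Subtracting $x^{r}L(x)$ leaves $\frac{\alpha}{r}x^{r}L(x)$, as claimed. The only points requiring care are the harmless local integrability issue at $0$ and the fact that \eqref{Lreg} is typically quoted starting from a positive lower limit, but since the low-end contribution is of smaller order than $x^r L(x)$ this is a minor bookkeeping matter rather than a real obstacle.
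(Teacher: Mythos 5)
Your argument is correct and is essentially the proof the paper has in mind when it calls the proposition ``an easy consequence of \eqref{Lreg}'': a Fubini computation to express $\E B_+^p\Ind{B\leq x}$ via $\int_0^x p t^{p-1}\P(B>t)\dd t - x^p\P(B>x)$, followed by substitution of $\P(B>t)=t^{-\alpha}L(t)$ and a direct application of \eqref{Lreg} and \eqref{Lreg2}. The one place worth making explicit is the final subtraction in the second asymptotic: since $(\alpha+r)\int_0^x t^{r-1}L(t)\dd t \sim \frac{\alpha+r}{r}x^rL(x)$ with a limiting ratio $\frac{\alpha+r}{r}>1$, the difference $(\alpha+r)\int_0^x t^{r-1}L(t)\dd t - x^rL(x)$ is $\sim(\frac{\alpha+r}{r}-1)x^rL(x)=\frac{\alpha}{r}x^rL(x)$, which is legitimate precisely because $\frac{\alpha}{r}>0$; you handled this and the low-end integrability at $0$ (via $L(t)\leq t^\alpha$) correctly.
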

\noindent Under \rm{(B-1)}, condition \rm{(B-2)} implies that  $\widetilde{L}(x)\to\infty$ as $x\to\infty$.

In this chapter the previous results in the renewal theory will be applied to the random variable $Z$ with the law defined by 
\begin{align}\label{defZ}
\P(Z\in \cdot)=\E A^{\alpha} \Ind{\log A\in \cdot}.
\end{align}

\subsection{Tails of perturbed multiplicative random walk}
In this section we study the asymptotics of $\P(R>x)$, {where $R$ is defined in \eqref{defR}.}
Under \rm{(A-2)} and \rm{(B-1)} with $\beta<\alpha$, we have $\E A^\beta<1$ and $\E B^\beta<\infty$. 
Since 
\[R_+^\beta\leq \sum_{n=1}^\infty \Pi_{n-1}^\beta (B_n)_+^\beta,\]
$R_+$ has finite moments up to $\alpha$. 
If one assumes additionally that
\begin{align}\label{ARB}
\E A^{\eta}B_+^{\alpha-\eta}<\infty\qquad\mbox{for some }\eta\in(0,\alpha),
\end{align}
then
\[\E\min\{AR,B\}_+^\alpha\leq \E (AR_+)^\alpha\Ind{AR_+\leq B_+}+\E B_+^\alpha\Ind{B_+<AR_+}<\infty,\]
because $\Ind{AR_+\leq B_+}\leq (B_+/AR_+)^{\alpha-\eta}$ and $\Ind{B_+<AR_+}\leq (AR_+/B_+)^\eta$.
The main theorem of this section is 

\begin{theorem}\label{pert}
	Assume {\rm(A-1)-(A-2)} and {\rm(B-1)-(B-2)}. 
	
	\begin{itemize}
		\item[\rm (i)]	If \eqref{ARB} holds, then
		\[x^\alpha \P(R>x)\sim\frac{\widetilde{L}(x)}{\rho}.\]
		\item[\rm (ii)]	
		If $\E A^{\alpha+\varepsilon}<\infty$ for some $\varepsilon>0$ and additionally the distribution of $Z$ defined by \eqref{defZ} is strongly non-lattice, then
		\begin{equation}\label{pert1}
		x^\alpha\P(R>x)=\frac{\widetilde{L}(x)}{\rho}-\frac{\E\min\{AR,B\}_+^\alpha}{\alpha\rho}+O(L(x)).
		\end{equation}
	\end{itemize}
\end{theorem}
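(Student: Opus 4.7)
The plan is to recast $\P(R>x)$ as the solution of a renewal equation and then apply the renewal theorems of the previous section. Starting from the identity $R\stackrel{d}{=}\max\{AR,B\}$ with $R$ independent of $(A,B)$, one has
\[
\P(R>x)-\P(AR>x)=\P(B>x)-\P(\min\{AR,B\}>x).
\]
Multiplying by $x^\a$, setting $x=e^t$, and performing the Esscher tilt $\mu(\dd z):=\E A^\a\Ind{\log A\in\dd z}$, which is a probability measure on $\R$ of mean $\rho$ by {\rm(A-2)}, rewrites $e^{\a t}\P(AR>e^t)$ as $(f\ast\mu)(t)$, where $f(t):=e^{\a t}\P(R>e^t)$. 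Setting $h(t):=e^{\a t}\P(\min\{AR,B\}>e^t)$ and $g(t):=L(e^t)-h(t)$, one obtains the renewal equation $f=g+f\ast\mu$. Since $f(t)\leq e^{\a t}\to 0$ at $-\infty$ and the random walk with step $\mu$ drifts to $+\infty$, iteration gives $f\ast\mu^{\ast N}(t)\to 0$ for each $t$, yielding
\[
f(t)=\int_\R g(t-z)\,H(\dd z),\qquad H=\sum_{n\geq 0}\mu^{\ast n}.
\]

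For part (i), I would split this integral into the $L$-piece over $(-\infty,0]$, $(0,t]$ and $(t,\infty)$ together with the $h$-piece on $\R$. The middle piece supplies the main term $\widetilde L(e^t)/\rho$ via Theorem \ref{corL}. For $z\leq 0$, Potter's bound gives $L(e^{t-z})\leq A(\d)L(e^t)e^{-\d z}$, and since convexity of $\b\mapsto\E A^\b$ combined with {\rm(A-2)} yields $\E A^{\a-\d}<\infty$, equivalently $\E e^{-\d Z}<\infty$ for small $\d>0$, \eqref{Hleft} forces this contribution to be $O(L(e^t))$. For $z>t$ the bound $L(x)\leq x^\a$ on $(0,1]$ together with \eqref{Hineq1} gives $O(1)$. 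For the $h$-piece, monotonicity of $t\mapsto\P(\min\{AR,B\}>e^t)$ combined with $\E\min\{AR,B\}_+^\a<\infty$ (which follows from \eqref{ARB}) implies $\sum_{n\in\Z}\sup_{[n,n+1)}h<\infty$, so $h$ is directly Riemann integrable; the KRT then gives $\int_\R h(t-z)H(\dd z)\to\E\min\{AR,B\}_+^\a/(\a\rho)$, an $O(1)$ term. Since $\widetilde L(e^t)\to\infty$ by {\rm(B-2)}, all three remainders are $o(\widetilde L(e^t))$ and (i) follows.

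For part (ii), the assumption $\E A^{\a+\eps}<\infty$ gives $\E e^{\eps Z}=\E A^{\a+\eps}<\infty$ and $\P(Z\leq x)=o(e^{rx})$ at $-\infty$ for every $r\in(0,\a)$; combined with strong non-lattice this activates Theorem \ref{LTH}, which delivers
\[
\int_\R L(e^{t-z})H(\dd z)=\frac{1}{\rho}\widetilde L(e^t)+O(L(e^t)).
\]
For $\int_\R h(t-z)H(\dd z)$, Stone's expansion \eqref{Hright} of $H$ decomposes $H$ as $\Ind{z>0}\dd z/\rho$ plus a signed measure with exponentially decaying tails; combined with the bounds $h(s)\leq e^{\a s}$ for $s\leq 0$ and $h(s)\leq L(e^s)$ for $s\geq 0$ (the latter since $\P(\min\{AR,B\}>e^s)\leq\P(B>e^s)$), a careful decomposition shows
\[
\int_\R h(t-z)H(\dd z)=\frac{\E\min\{AR,B\}_+^\a}{\a\rho}+O(L(e^t)),
\]
and substituting $x=e^t$ produces \eqref{pert1}. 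I expect the most delicate step to be this last estimate: pinning down the $h$-integral at order $L(e^t)$ requires both Stone's refinement of the renewal measure and the control of $\int_t^\infty h(s)\,\dd s$ via the identity $\a\int_t^\infty h(s)\,\dd s=\E\min\{AR,B\}_+^\a\Ind{\min\{AR,B\}>e^t}-h(t)$ combined with the H\"older-type domination $\min\{AR,B\}_+^\a\leq(AR_+)^\eta B_+^{\a-\eta}$ implicit in \eqref{ARB}.
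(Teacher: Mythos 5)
Your overall architecture — Esscher tilt to define $Z$, renewal equation $f=g+f\ast\mu$, splitting $g$ into the $L$-piece $L(e^t)$ and the well-behaved $h$-piece $-e^{\a t}\P(\min\{AR,B\}>e^t)$, then decomposing the $L$-integral into $(-\infty,0]$, $(0,t]$, $(t,\infty)$ and invoking Theorems \ref{corL}/\ref{LTH} for the middle range — is exactly the paper's strategy, and the Potter/\eqref{Hleft} and $L(x)\le x^\a$/\eqref{Hineq1} estimates for the two tail pieces are fine. For the $h$-piece you verify direct Riemann integrability and invoke the Key Renewal Theorem; the paper instead writes $\int_\R h(t-z)H(\dd z)$ via Fubini as $\a\,\E\min\{AR,B\}_+^\a\int_0^\infty e^{-\a s}H((t-D,t-D+s])\dd s$ and passes to the limit with Blackwell's theorem. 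Both routes give the same $O(1)$ constant; yours is a bit more self-contained and the paper's is more in the spirit of the explicit error analysis that part (ii) needs.

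There is, however, a genuine gap in your justification that $f\ast\mu^{\ast N}(t)\to 0$. You argue from $f(t)\le e^{\a t}$ together with $S_N\to+\infty$, but $(f\ast\mu^{\ast N})(t)=\E f(t-S_N)$ and the domination $f(t-S_N)\le e^{\a t}e^{-\a S_N}$ is useless when $\P(A>0)=1$: then $\E e^{-\a S_N}=\E A^\a A^{-\a}=1$ for every $N$, $e^{-\a S_N}\to 0$ a.s.\ but is not uniformly integrable, and dominated convergence does not close. One really needs a sub-$\a$ moment: since $R_+$ has finite moments up to (but excluding) $\a$, Markov gives $f(s)\le \E R_+^{\a-\eps}\,e^{\eps s}$ for every $s$, and then $\E f(t-S_N)\le \E R_+^{\a-\eps}\,e^{\eps t}\,(\E A^{\a-\eps})^N\to 0$ because $\E A^{\a-\eps}<1$ by strict convexity of $\b\mapsto\E A^\b$. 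This is exactly the step the paper carries out.

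For part (ii), the estimate you leave as ``a careful decomposition'' is the core of the argument and you should not omit it. The pieces you would need are the ones you flagged: $\E\min\{AR,B\}_+^{\a+\d}<\infty$ for some small $\d>0$ (which follows from $\E A^{\a+\eps}<\infty$ via H\"older, exactly as in the display preceding the theorem), so that both $h(t)$ and $\E\min\{AR,B\}_+^\a\Ind{\min\{AR,B\}>e^t}$ are $O(e^{-\d t})$; and Stone's bound $|R(x)-C|\le K e^{-rx}$ to control $\int_\R h(t-z)\,R(\dd z)$. The paper packages this as showing the error from the $h$-piece is $o(e^{-\d' x})$ for some $\d'>0$, which dominates any slowly varying $L(e^x)$, and you should convince yourself that your decomposition yields at least $O(L(e^t))$ before claiming \eqref{pert1}.
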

\begin{remark}
	\begin{enumerate}
		\item 
		We say that the law $\mu$ is spread-out if there exists $n\in\N$ such that $n$-th convolution $\mu^{\ast n}$ has a non-zero absolutely continuous part. 
		Notice that if the law of $\log A$ is spread-out then the law of $Z$ is spread-out and so it is strongly non-lattice. If the law of $A$ has a non-trivial absolutely continuous component then the same holds for $\log A$ implying that the law of $Z$ is strongly non-lattice and we have \eqref{pert1}.
		
		\item $\E A^{\alpha+\varepsilon}<\infty$ through H\"older inequality implies
			\eqref{ARB}.
		
		\item By \eqref{pert1}, for any $\lambda\geq 1$, we have
		\[(\lambda x)^\alpha\P(R>\lambda x)-x^\alpha\P(R>x)=O(L(x)),\qquad\mbox{ as }x\to\infty,\]
		which means that $x\mapsto x^\alpha\P(R>x)\in O\Pi_L$ (see \cite[Chapter 3]{BGT89}).
	\end{enumerate}
\end{remark}
\begin{proof}
	Let $f(x)=e^{\alpha x}\P(R>e^x)$ and $\psi(x)=e^{\alpha x}\left(\P(R>e^x)-\P(AR>e^x)\right)$.
	Let $g\colon \R^n\to\R$ be a Borel function. If $(Z_k)_{k}$ is an i.i.d. sequence with the law \eqref{defZ}, then
	\[\E g(Z_1,\ldots,Z_n)=\E \Pi_n^{\alpha}g(\log A_1,\ldots, \log A_n),\]
	where $(A_k)_{k}$ are i.i.d.
	In particular,
	\begin{align}\label{EZp}
	\E Z=\E A^{\alpha}\log A\in(0,\infty).
	\end{align}
	Then, {(c.f. \eqref{reneq})}
	\begin{align}\label{renpert}
	f(x)=\psi(x)+\E A^{\alpha}f(x-\log A)=\psi(x)+\E f(x-Z).
	\end{align}
	Iterating \eqref{renpert} we obtain
	\[f(x)=\sum_{k=0}^{n-1} \E \psi(x-S_k)+\E f(x-S_{n}),\]
	where $S_n=Z_1+\ldots+ Z_n$, $S_0=0$.
	Clearly, if the law of $\log A$ given $A>0$ is non-arithmetic under $\P$, then the law of $Z$ is non-arithmetic as well.
	
	By \eqref{EZp}, the random walk $(S_n)_{n\geq 1}$ has positive drift, thus $S_n\to\infty$ with probability $1$ as $n\to\infty$.
	Moreover, since $R$ has finite moments up to $\alpha$, by Markov inequality we have $f(x)\leq \E R^{\a-\eps} e^{\eps x}$ for $\eps\in (0,\alpha)$. Thus,
	\[\E f(x-S_n)\leq \E R_+^{\a-\eps} e^{\eps x} \E e^{-\eps S_n}=\E R_+^{\a-\eps} e^{\eps x} \E \Pi_n^{\alpha-\eps}=\E R_+^{\a-\eps}e^{\eps x} \left(\E A^{\alpha-\eps}\right)^n\to 0\]
	as $n\to\infty$ and so
	\[f(x)=\sum_{k=1}^{\infty} \E \psi(x-S_k)=\int_{\R}\psi(x-z)H(\dd z),\]
	where $H$ is the renewal measure of $(S_n)_{n\geq1}$.
	
	In our case $\psi$ is not dRi (it is not even in $L^1$), so the Key Renewal Theorem is not applicable. Instead, we consider $\psi_B(x)=e^{\alpha x}\P(B>e^x)=L(e^x)$ and define $\psi_0=\psi-\psi_B$.  First we will show that $\int_{\R}\psi_0(x-z)\dd H(z)$ is convergent as $x\to\infty$ to a finite limit. Therefore, $\int_\R \psi_B(x-z)\dd H(z)$ will constitute the main part (see Theorems \ref{corL} and \ref{LTH}).
	Indeed, $\psi_0(x)=-e^{\alpha x}\P(\min\{AR,B\}>e^x)$ 
	and 
	\begin{align}\label{tech2}
	\int_{\R} e^{\alpha (x-z)}\P(\min\{AR,B\}>e^{x-z})H(\dd z)= \E \int_{(x-D,\infty)} e^{\alpha (x-z)}H(\dd z) \Ind{\min\{AR,B\}>0},
	\end{align}
	where $D=\log \min\{AR,B\}$.
	Using Fubini's theorem and changing the variable $t=z-x+D$, we obtain
	\[-\int_{\R}\psi_0(x-z)H(\dd z)=\alpha \E\min\{AR,B\}_+^\alpha \int_0^\infty e^{-\alpha t}H\left((x-D,x-D+t] \right)\dd t.\]
	By \eqref{Hineq1}, we may take the limit as $x\to\infty$ inside the integral. 
	Thus, by the Blackwell Theorem we get
	\[-\int_{\R}\psi_0(x-z)H(\dd z)\to \E\min\{AR,B\}_+^\alpha \int_0^\infty \alpha e^{-\alpha t} \frac{t}{\E Z}\dd t=\frac{\E\min\{AR,B\}_+^\alpha}{\alpha\rho}.\]
	For the main part, we have
	\[\int_\R \psi_B(x-z)H(\dd z)=\int_{(-\infty,0]} L(e^{x-z})H(\dd z)+\int_{(0,x]} L(e^{x-z})H(\dd z)+\int_{(x,\infty)} L(e^{x-z})H(\dd z).\]
	
	Let us concentrate now on the first order asymptotics, point {\rm (i)}.	
	We will show that
	\begin{align*}
	\int_{(-\infty,0]} L(e^{x-z})H(\dd z)=O(L(e^x)),\qquad \int_{(x,\infty)} L(e^{x-z})H(\dd z)=O(1)
	\end{align*}
	and 
	\[ \int_{(0,x]} L(e^{x-z})H(\dd z)\sim\frac{\widetilde{L}(x)}{\rho}\]
	Observe that $\P(Z\leq x)=\E A^\a \Ind{\log A\leq x}\leq e^{\a x}$ for any $x\in\R$ and consider the limit 
	\[\lim_{x\to\infty}\int_{(-\infty,0]} \frac{L(e^{x-z})}{L(e^x)}H(\dd z).\] 
	For any $\delta>0$, the integrand is bounded by $c e^{-\delta z}$ for some $c>1$ by Potter bounds.
	Combining this with \eqref{Hleft} and Lebesgue's Dominated Convergence Theorem we conclude that 
	\begin{align}\label{LH0}
	\int_{(-\infty,0]} L(e^{x-z})H(\dd z)\sim L(e^x)H((-\infty,0]).
	\end{align}
[Asymptotics of $H\left((-\infty,x]\right)$ which is more precise than \eqref{Hleft} is available here (see \cite{BK16}): 
$e^{\alpha x}H((-\infty,-x])\to (-\alpha\E \log A)^{-1}$ as $x\to\infty$.]

	Further, since $x^{-\alpha}L(x)=\P(B>x)\leq 1$, we have
	\[\int_{(x,\infty)} L(e^{x-z})H(\dd z)\leq \int_{(x,\infty)}e^{\alpha(x-z)}H(\dd z)=\alpha\int_0^\infty e^{-\alpha s}H\left((x,s+x]\right)\dd s\to \frac{1}{\E Z},\]
	again by the Lebesgue Dominated Convergence Theorem.
	
	The first part of the assertion will follow from Theorem \ref{corL}. Indeed, we already know that the expectation of $Z$ is strictly positive and finite. 
	Moreover, the law of $Z$ is non-arithmetic.	Thus, 
	\[\int_\R \psi_B(x-z)H(\dd z)\sim \frac{\widetilde{L}(x)}{\rho},\qquad\mbox{ as }x\to\infty.\]

	For the purpose of second order asymptotics {\rm (ii)}, we additionally assume that $\E A^{\alpha+\eps}$ is finite and that the law of $Z$ is strongly non-lattice. 
	Observe that $\E \exp(\eps Z)=\E A^{\alpha+\eps}<\infty$ and thus, the assumptions of Theorem \ref{LTH} are satisfied. Thus,
	\[\int_\R \psi_B(x-z)H(\dd z)=\frac{\widetilde{L}(x)}{\rho}+O(L(e^x)),\qquad\mbox{ as }x\to\infty.\]
	So far we have shown that
	\begin{align}\label{err}
	e^{\alpha x}\P(R>e^x)=\frac{\widetilde{L}(e^x)}{\rho}-\frac{\E\min\{AR,B\}_+^\alpha}{\alpha\rho}+O(L(e^x))+o(1),
	\end{align}
	where 
	\[
	o(1)=\int_{\R}\psi_0(x-z)H(\dd z)+\frac{\E\min\{AR,B\}_+^\alpha}{\alpha\rho}=:K(x)
	\] 
	is the error term coming from the integral of $\psi_0$. However, $L$ may be decreasing to $0$ (e.g. $L(t)\sim1/\log(t)$) and we want to be more precise here. 
	We will show that for some $\delta>0$,
	\[K(x)=o(e^{-\delta x}).\]
	and in such case we may drop $o(1)$ in \eqref{err}.
	
	Note that
	$\E A^{\alpha+\eps}<\infty$ implies $\E\min\{AR,B\}_+^{\alpha+\d}<\infty$ for $\delta<\frac{\a\eps}{\a+\eps}$. Indeed, we have
	\[\E B^{\a+\d}\Ind{0<B\leq AR}\leq \E R_+^\eta\, \E B_+^{\a+\d-\eta} A^{\eta}\leq \E R_+^\eta \left(\E B_+^{q(\a+\d-\eta)}\right)^{1/q}\left(\E A^{\eta p}\right)^{1/p},\]
	where $p^{-1}+q^{-1}=1$ and $\eta>0$. The right hand side is finite for $\eta\in(\d\frac{\a+\eps}{\eps},\a)$ with $p=\frac{\a+\eps}{\eta}$. Analogously we show that $\E (AR_+)^{\a+\d}\Ind{B> AR}<\infty$.
	We write (recall that $D=\log \min\{AR,B\}$)
	\begin{align*}
	K(x)=&-\alpha \E\min\{AR,B\}_+^\alpha \int_0^\infty e^{-\alpha t}\left(H\left((x-D,x-D+t]\right)-\frac{t}{\E Z}\right)\dd t\, \Ind{D\leq x}\\
	&-\alpha \E\min\{AR,B\}_+^\alpha \int_0^\infty e^{-\alpha t}H\left((x-D,x-D+t]\right)\dd t\, \Ind{D>x} \\\
	& +\frac{\E\min\{AR,B\}_+^\alpha \Ind{D>x}}{\a \E Z} = K_1+K_2+K_3.
	\end{align*}
	We have
	\[|K_2+K_3|\leq C \E \min\{AR,B\}_+^\alpha \Ind{\min\{AR,B\}>e^x}\leq c e^{-\d x}\min\{AR,B\}^{\a+\d}.\]
	Moreover, under our setup we know that for $R(x)=H((-\infty,x])-x/\E Z$ one has $R(x)-(2(\E Z)^2)^{-1}\E Z^2=o(\exp(-rx))$ as $x\to\infty$ and thus $|R(x)-(2(\E Z)^2)^{-1}\E Z^2|\leq C \exp(-rx)$ for some $C>0$ and $0<r<\delta$ and all $x\geq0$. 
	Then
	\begin{align*}
	|K_1| & \leq \alpha \E\min\{AR,B\}_+^\alpha \int_0^\infty e^{-\alpha t}\left(|R(x-D+t)|+|R(x-D)|\right)\dd t \Ind{D\leq x} \\
	& \leq \widetilde{C} e^{-r x} \E\min\{AR,B\}_+^{\alpha+r}
	\end{align*}
	and the conclusion follows.

\end{proof}

%%%%%%%%%%%%%%%%%%%%%%%%%%%%%%%%%%%%%%%%%%%%%%%%%%%%%%%%%%%%%
%%%%%%%%%%%%%%%%%%%%%%%%%%%%%%%%%%%%%%%%%%%%%%%%%%%%%%%%%%%%%
%%%%%%%%%%%%%%%%%%%%%%%%%%%%%%%%%%%%%%%%%%%%%%%%%%%%%%%%%%%%%
\section{Proofs}\label{proofs}

\begin{proof}[Proof of Theorem~\ref{corL}]
	Using the definition of a slowly varying function, it is easy to see that the integral over $(0,x_0]$ is $O(L(\exp(x)))$ as $x\to\infty$.
	Indeed, for $x_0>0$,
	\[\limsup_{x\to\infty}\int_{(0,x_0]} \frac{L(e^{x-z})}{L(e^x)}H(\dd z) \leq \lim_{x\to\infty} \sup_{z\in(0,x_0]}\frac{L(e^{x-z})}{L(e^x)}  H((0,x_0])=H((0,x_0]),\]
	since the convergence in \eqref{reg} is locally uniform outside zero (\cite[Theorem 1.5.2]{BGT89}).
	Moreover, the integral over $(x-x_0,x]$ is $O(1)$ as $x\to\infty$. Indeed, by the local boundedness of $L$ we have
	\[\int_{(x-x_0,x]} L(e^{x-z})H(\dd z)\leq \sup_{t\in [0,x_0)}L(e^t)\, H\left((x-x_0,x]\right)\]
	and, by the Blackwell theorem, the right hand side above converges.
	Thus, it is enough to concentrate on the integral over $(x_0,x-x_0]$. 
	Let us fix $n\in\mathbb{N}$ and $\eps>0$ and observe that
	\[\bigcup_{k=0}^{\left\lfloor n(x-2 x_0)\right\rfloor-1} \left(x_0+\frac{k}{n},x_0+\frac{k+1}{n}\right]\subset(x_0,x-x_0]\subset\bigcup_{k=0}^{\left\lceil n(x-2 x_0)\right\rceil-1} \left(x_0+\frac{k}{n},x_0+\frac{k+1}{n}\right].\]
	Further, by Potter bounds (\cite[Theorem~1.5.6 (i)]{BGT89}), if $z\in \left(x_0+k/n,x_0+(k+1)/n\right]$, for any $\delta>0$ and sufficiently large $x$ and $x_0$ we have
	\[\frac{L(e^{x-z})}{ L(e^{x-x_0-k/n})}\leq (1+\eps)e^{\delta/n}\leq(1+\eps)^2\]
	and similarly for the lower bound.
	Moreover, let $x_0$ be such that for any $k\geq0$,
	\[(1-\eps)\frac{1}{n \E Z} \leq H\left(\left(x_0+\frac{k}{n},x_0+\frac{k+1}{n}\right]\right)\leq (1+\eps)\frac{1}{n \E Z}.\]
	Altogether, above considerations yield 
	\begin{multline*}
	\frac{(1-\eps)^3}{\E Z} \frac{1}{n}\sum_{k=0}^{\left\lfloor n(x-2 x_0)\right\rfloor-1} L(e^{x-x_0-k/n}) \leq \int_{(x_0,x-x_0]} L(e^{x-z})\dd H(z) \\ \leq \frac{(1+\eps)^3}{\E Z} \frac{1}{n} \sum_{k=0}^{\left\lceil n(x-2 x_0)\right\rceil-1} L(e^{x-x_0-k/n})
	\end{multline*}
	for any $n$, $\eps$ and sufficiently large $x_0$. This gives us that 
	\[\int_{(x_0,x-x_0]} L(e^{x-z})H(\dd z)\sim \frac{1}{\E Z} \int_{x_0}^{x-x_0} L(e^{t})\dd t\sim \frac{1}{\E Z} \widetilde{L}(x_0,e^x)\]
	and the assertion follows.
\end{proof}

\begin{proof}[Proof of Theorem~\ref{LTH}]
	We have
	\begin{align*}\int_\R L(e^{x-z})H(\dd z)&=\int_{(-\infty,0]}  L(e^{x-z})H(\dd z)+\int_{(0,\infty)} e^{\alpha(x-z)}\P(B>e^{x})\dd H(z) \\
	&+\int_{(0,\infty)} e^{\alpha(x-z)}\P(e^x\geq B>e^{x-z})\dd H(z).\end{align*}
	We already know that the first term is asymptotically equivalent to $L(\exp(x))H((-\infty,0])$ (see \eqref{LH0}).
	
	The second term equals $L(\exp(x))\int_0^\infty \exp(-\alpha z)H(\dd z)$ and the integral is convergent, thus it is of the same order as the first one.
	
	The main contribution comes from the third term, which is equal to
	\[(\E Z)^{-1}\int_0^\infty e^{\alpha(x-z)}\P(e^x\geq B>e^{x-z})\dd z+\E \int_{(x-\log B,\infty)} e^{\alpha(x-z)}\dd R(z)\Ind{0<B\leq e^x}=K_1+K_2,\]
	where $R(z)=H((-\infty,z])-z/\E Z$. Since $\int_0^\infty L(\exp(x-z))\dd z=\widetilde{L}(0,\exp(x))$, we have
	\[K_1=\frac{1}{\E Z}\widetilde{L}(0,e^x)+O(L(e^x))\]
	and after integrating by parts and changing the variable $t=z-x+\log B$,
	\[|K_2|\leq \alpha\E B^\alpha I(0<B\leq e^x)\int_0^\infty e^{-\alpha t}|(R(t+x-\log B)-R(x-\log B)|\dd t.\]
	It remains to show that $K_2(x)=O(L(e^x))$. 
	 
	Since $\E \exp(\varepsilon Z)<\infty$, we get that $\exp(\eps x)\P(Z>x)\to 0$. Moreover, by assumption, the distribution of $Z$ is strongly non-lattice. Thus, by \eqref{Hright}, there exists $r>0$ such that $R(x)-C=o(\exp(-r x))$ as $x\to\infty$, where $C=(2\E Z)^{-1}\E Z^2$. This implies that $|R(x)-C|\leq K \exp(-r x)$ for all $x>0$ and some finite $K$.
	
	We have
	\begin{align*}
	|K_2|\leq &\alpha \E B^\alpha I(0<B\leq e^x)\int_0^\infty e^{-\alpha t}|(R(t+x-\log B)-C|\dd t \\
	&+ \E B^\alpha I(0<B\leq e^x)|R(x-\log B)-C| \\
	&\leq K\left(1+\frac{\alpha}{\alpha+r}\right) e^{-r x}\E B_+^{\alpha+r}\Ind{B\leq e^x}=O(L(e^x))
	\end{align*}
	by Proposition \ref{Prop1}.
\end{proof}

\subsection*{Acknowledgements}
The authors are thankful to anonymous referee for simplifying the proof of Theorem \ref{corL}. Remark \ref{rem} was proposed by a referee. Ewa Damek was partially supported by the NCN Grant UMO-2014/15/B/ST1/00060.
Bartosz Ko{\l}odziejek was partially supported by the NCN Grant UMO-2015/19/D/ST1/03107.


\begin{thebibliography}{99}

\bibitem{Ara06}
V.~F. Araman and P.~W. Glynn.
\newblock Tail asymptotics for the maximum of perturbed random walk.
\newblock \emph{Ann. Appl. Probab.}, 16\penalty0 (3):\penalty0 1411--1431,
2006.

\bibitem{AMN78}
K.~B. Athreya, D.~McDonald and P.~Ney.
\newblock Limit theorems for semi-{M}arkov processes and renewal theory for {M}arkov chains.
\newblock \emph{Ann. Probab.}, 6\penalty0 (5):\penalty0 788--797,
1978.

\bibitem{BGT89}
N.~H. Bingham, C.~M. Goldie, and J.~L. Teugels.
\newblock \emph{Regular variation}, volume~27 of \emph{Encyclopedia of
	Mathematics and its Applications}.
\newblock Cambridge University Press, Cambridge, 1989.

\bibitem{Black53}
D.~Blackwell.
\newblock Extension of a renewal theorem.
\newblock \emph{Pacific J. Math.}, 3:\penalty0 315--320, 1953.

\bibitem{BDM2016}
D.~Buraczewski, E.~Damek, and T. Mikosch.
\newblock \emph{Stochastic Models with Power-Law Tails. The Equation $X=AX+B$.}
\newblock Springer Series in Operations Research and Financial Engineering.
Springer International Publishing, Switzerland, 2016.

\bibitem{Gol91}
C.~M. Goldie.
\newblock Implicit renewal theory and tails of solutions of random equations.
\newblock \emph{Ann. Appl. Probab.}, 1\penalty0 (1):\penalty0 126--166, 1991.

\bibitem{Ren11}
P.~Hitczenko and J.~Weso{\l}owski.
\newblock Renorming divergent perpetuities.
\newblock \emph{Bernoulli}, 17\penalty0 (3):\penalty0 880--894, 2011. 

\bibitem{Iks}
A.~Iksanov.
\newblock \emph{Renewal theory for perturbed random walks and similar
	processes.}
\newblock Probability and its Applications. 
Birkh\"auser/Springer, Cham, 2017.

\bibitem{IksPi14}
A.~Iksanov and A.~Pilipenko.
\newblock On the maximum of a perturbed random walk.
\newblock \emph{Statist. Probab. Lett.}, 92:\penalty0 168--172, 2014.

\bibitem{IksPiSa17}
A.~Iksanov, A.~Pilipenko and I.~Samoilenko.
\newblock Functional limit theorems for the maxima of perturbed random walk and divergent perpetuities in the {$M_1$}-topology
\newblock \emph{Extremes}, 20\penalty0(3):\penalty0 567--583, 2017.

\bibitem{KM96}
H.~Kesten and R.~A. Maller.
\newblock Two renewal theorems for general random walks tending to
infinity.
\newblock \emph{Probab. Theory Related Fields}, 106\penalty0 (1): 1--38,
1996.

\bibitem{Kevei2016}
P.~Kevei.
\newblock A note on the {K}esten-{G}rincevi\v cius-{G}oldie theorem.
\newblock \emph{Electron. Commun. Probab.}, 21\penalty0 (51):\penalty0 1--12,
2016.

\bibitem{BK16}
B.~Ko{\l}odziejek.
\newblock The left tail of renewal measure.
\newblock \emph{Statist. Probab. Lett.}, 129:\penalty0 306--310, 2017.

\bibitem{Palm07}
Z.~Palmowski and B.~Zwart.
\newblock Tail asymptotics of the supremum of a regenerative process.
\newblock \emph{J. Appl. Probab.}, 44\penalty0 (2):\penalty0 349--365, 2007.

\bibitem{Schle98}
S.~Schlegel.
\newblock Ruin probabilities in perturbed risk models
\newblock \emph {Insurance Math. Econom.}, 22\penalty0: 93-104, 1998. 

\bibitem{Schm95} 
H.~Schmidli.
\newblock Cram\'er-Lundberg approximations for ruin probabilities of risk processes perturbed by diffusion 
\newblock \emph {Insurance Math. Econom.}, 16\penalty0: 135-149, 1995. 

\bibitem{Stone65}
C.~Stone.
\newblock On moment generating functions and renewal theory.
\newblock \emph{Ann. Math. Statist.}, 36:\penalty0 1298--1301, 1965.

\bibitem{Wan14}
Y.~Wang.
\newblock Convergence to the maximum process of a fractional {B}rownian motion
with shot noise.
\newblock \emph{Statist. Probab. Lett.}, 90:\penalty0 33--41, 2014.

\end{thebibliography}
\end{document}